\newcommand {\sem}[1]{\mbox{$\left (\e^{t{#1}}\right )_{t \ge 0}$}}
\newcommand{\e}{\mathrm {e}}
\newcommand{\eps}{\epsilon}
\newcommand{\ud}{\, \mathrm{d}}
\def\RR{{\mathbb{R}}}
\def\CC{{\mathbb{C}}}
\def\cL{{\mathcal{L}}}
\def\cA{{\mathcal{A}}}
\def\1{{\mathbf{1}}}
\newtheorem{theorem}{Theorem}[section]
\newtheorem{corollary}[theorem]{Corollary}
\newtheorem{lemma}[theorem]{Lemma}
\newtheorem{problem}[theorem]{Open Problem}
\theoremstyle{definition}
\newtheorem{definition}[theorem]{Definition}
\newtheorem{example}[theorem]{Example}
\newtheorem{remark}[theorem]{Remark}
\begin{document}

\title{On shape preserving semigroups}
\author[A. B\'{a}tkai]{Andr\'{a}s B\'{a}tkai}
\address{E\"{o}tv\"{o}s Lor\'{a}nd University, Institute of Mathematics, 1117 Budapest, P\'{a}zm\'{a}ny P. s\'{e}t\'{a}ny 1/C, Hungary.}
\email{batka@cs.elte.hu}
\thanks{Supported by the Alexander von Humboldt-Stiftung}

\author[A. Bobrowski]{Adam Bobrowski}
\address{Department of Mathematics, Faculty of Electrical Engineering
and Computer Science, Lublin University of Technology, Nadbystrzycka 38A 20-618 Lublin, Poland}
\email{a.bobrowski@pollub.pl}

\date{\today}

\begin{abstract}
Motivated by positivity-, monotonicity-, and convexity preserving differential equations, we introduce a definition of shape preserving operator semigroups and analyze their fundamental properties. In particular,
we prove that the class of shape preserving semigroups is preserved by perturbations and taking limits. These results are applied to partial delay differential equations.
\end{abstract}

\maketitle
\section{Introduction}

In applications it is often important to know whether some properties of  solutions of an evolution equation involved remain unchanged in time.
This is the case, for instance, with positivity of solutions of population equations, with monotonicity of solutions in transport processes and with convexity of solutions of certain partial differential equations, see for example Bian and Guan \cite{bian-guan},Blossey and Durran \cite{blossey-durran},Ekstrom and Tysk \cite{ekstrom-tysk}, Korevaar \cite{korevaar}, or P.--L.~Lions and Musiela \cite{lions-musiela}. Despite considerable interest in these questions, an abstract operator semigroup
theoretic approach seems to exist only in special situations (see Remark \ref{rem:qqriq}). The aim of this note is to look for the right definition of shape-preserving semigroups, and to present basic examples and properties of such semigroups. In particular, we would like to argue that a previous definition, due to M. Kov\'acs \cite{kovacs}, though it connects geometrical notions with semigroup theory in an intriguing way, has its deficiencies, and propose a way to mend them. We leave many important questions unresolved, but hope to set the research on the right track and stimulate some activity in this field.

As we shall see in Section \ref{stability}, with the modified definition of $S$-shape preservability, under natural assumptions, limits and $S$-shape preserving perturbations will not lead out of the class of $S$-shape preserving semigroups. This, in contradistinction to Kov\'acs' approach,  allows dealing with concrete examples of Section \ref{four}.

To recall Kov\'acs' definition,
let $X$ and $Y$ be Banach lattices, $S:X\to Y$ be a closed operator, and let $(A,D(A))$ be the generator of a strongly continuous semigroup $\sem{A}$ in $X$.
The archetypical examples of $S$ are the operators of first and second derivatives, describing monotone and convex functions, respectively.
More specifically, if $I\subset \RR$ is an open interval and $f:I\to \RR$ is sufficiently smooth, then $f$ is monotonically increasing if $f'\geq 0$, and
$f$ is convex if $f''\geq 0$.
The semigroup $\sem{A}$ is said to be \textbf{$S$-shape preserving in the sense of Kov\'{a}cs} if the following two conditions are satisfied:
\begin{enumerate}[(a)]
\item  $D(A) \subset D(S)$,
\item  $S\e^{tA} x \ge 0$ for $t \ge 0,$  provided $Sx \ge 0,\  x \in D(A).$
\end{enumerate}
(We note that taking $S$ to be the identity operator, we obtain the definition of a positive semigroup.)

We would like to argue that the following definition is more suitable. Let $X$ be a Banach space, $Y$ an ordered Banach space with closed positive cone, and let $S:X\to Y$ be a closed operator. Consider the sets
\begin{equation} \label{zbiorc}
C:= \overline{\{x \in D(S), Sx \ge 0 \}}, \qquad C_A := \overline {\{x \in D(A) \cap D(S), Sx \ge 0 \}}.
\end{equation}

\begin{definition}\label{def:shape_pres} We say that the semigroup $\sem{A}$ \textbf{preserves $S$-shape} if
\begin{itemize}
\item [(a)] $C= C_A$, and
\item [(b)] $\sem{A}$ leaves $C$ invariant.
\end{itemize}
\end{definition}
We comment that the most important part of the definition is that the semigroup leaves the set $C$ invariant. The equality in (a) is a technical condition, which we could not avoid in proving the invariance under Miyadera-Voigt type perturbations. However, in the examples presented later on, this condition was no restriction; on the contrary, it constituted a key technical tool in proving $S$-shape preservability. The requirement in question says that in a sense the semigroup generated by $A$ is \emph{compatible with $S$}, and it is our conviction that it agrees with intuitions better than Kovacs's condition (a).

\begin{remark}\label{rem:qqriq}
It is important to note that $C$ is a cone and the property just defined is a special case of the invariance of closed convex sets under semigroups. For the Hilbert space case, there is an extensive theory on this topic, and we thank an anonymous Referee for reminding this fact to us: for maximal monotone operators and nonlinear contraction semigroups, see Brezis \cite[Section IV.4]{brezis} and for operators defined by sesquilinear forms, see Ouhabaz \cite[Section 2.1]{ouhabaz}. Our approach differs from the ones just cited in the following aspects.
\begin{itemize}
\item We would like to use the special structure of the cone $C$, namely that it is given by an operator $S$.
\item We would like our results to be applicable in spacer other than Hilbert spaces, like the space of continuous functions.
\end{itemize}
Additionally, the criteria presented in the above two works seem to be difficult to apply in proving shape preservation, even in the Hilbert space case. For example, they require intimate knowledge of the projection onto the cone $C$.
\end{remark}

\section{Motivating examples}\label{examples}

\subsection*{Monotonicity}
In our first two examples,  $X = BUC(\mathbb R)$ is the space of bounded, uniformly continuous functions on $\mathbb R$. In order to describe monotone functions, we introduce $S = - \frac{\ud }{\ud x } $ with domain equal to $X^1$, the set of continuously differentiable functions in $X$ with the first derivative in $X.$

\begin{example}\label{eone} Recall, see Bertoin \cite[p. 11]{bertoin} or Kallenberg \cite[p.239]{kallenberg}, that a L\'evy process in $\mathbb R$ is a stochastic process $\xi_t, t \ge 0$ with stationary, independent increments and right-continuous paths with left-hand limits, and such that $P(\xi_0=0) = 1.$ The related semigroup of operators
$$ T_t f(x) = E \, f(x + \xi_t) \qquad x \in \mathbb R, t \ge 0$$
is strongly continuous in $X$, and $X^2$ (the set of twice continuously differentiable functions with both derivatives in $X$) is a core for its generator, see Bobrowski \cite[p. 279]{kniga}. (A special case is the shift semigroup). Moreover,  $T_t, t \ge 0$ leave $X^1$  invariant and
\begin{equation}
S T_t f = T_t Sf, \qquad f \in X^1, \label{monot}
\end{equation}
implying that the second condition in the definition of Kov\'{a}cs is satisfied. However, in general, the first one is not, as is seen from the case of the Poisson process, where the generator is bounded. At the same time, by \eqref{monot}, the semigroups related to L\'evy processes leave the set of non-increasing differentiable functions invariant, and Lemma \ref{lem:monot} in the Appendix implies that the same is true for the set of all non-increasing functions. Hence, the first condition of Kov\'acs seems to be too stringent, and contrary to intuition. On the other hand, the conditions of Definition \ref{def:shape_pres} are fulfilled.

\end{example}

\begin{example}
Consider a non-increasing function $\beta $ in $X$, and $A_\eps = \eps S + B, \eps >0$, where $B $ is the operator of multiplication by $\beta. $ The explicit Feynman--Kac-type formula
$$ \e^{tA_\eps } f(x) = \e^{\eps^{-1} \int_{x-\eps t}^x \beta (y) \ud y} f( x - \eps t) $$
makes it clear that all $\sem{A_\eps}, \eps >0$ preserve $S$-shape in the sense of Kov\'{a}cs. However, while the limit semigroup $\sem{B}$ (as $\eps \to 0$) given by
$$ \e^{tB} f (x) = \e^{t\beta(x)} f(x),$$
clearly maps non-increasing functions into non-increasing functions and is $S$-shape preserving in the sense of Definition \ref{def:shape_pres}, it does not satisfy condition (a) in Kov\'{a}cs' definition. This shows that this condition is not only contrary to intuition, but also that it causes $S$-shape preservability to be in general lost in the limit.
\end{example}

\begin{example}
Let $X=C[0,\infty]$ be the space of continuous functions on $\mathbb R^+$ with limits at infinity, and let $A= \frac 12 \frac{\ud^2}{\ud x^2} $ with domain composed of twice continuously differentiable functions with the second derivative in $X$ such that $f''(0)=0$, be the generator of the stopped (or: absorbed) Brownian motion, see Bobrowski \cite{kniga} or Liggett \cite{liggett}. Then $\sem{A}$ preserves monotonicity. To see this we recall the following relation (see e.g. Bobrowski \cite{cosine} or Liggett \cite{liggett})
$$ \e^{tA} f (x) = \e^{tA_0} \tilde f(x) , \qquad x \ge 0, t> 0$$
where $\e^{tA_0}f(x)= \frac 1{\sqrt{2\pi t}} \int_{-\infty}^\infty \e^{ - \frac {(x-y)^2}{2t}}f(x+y) \ud y  $ and $\tilde f$ is an extension of $f$ to $\mathbb R$, given by $\tilde f(x) = 2f(0) - f(-x), x \le 0.$ Hence, the claim follows as in  Example \ref{eone}, since $\tilde f $ is non-increasing if $f$ is.
\end{example}

\subsection*{Convexity-related notions}
\begin{example} Consider the left shift semigroup on $X=BUC(\RR^+)$; the generator is $Af=f'$ with domain composed of differentiable functions such that $f' \in X$. Clearly, convex and concave functions are preserved by this semigroup. However, defining the operator $Sf=f''$, $D(S)=\{f\in BUC(\RR^+)\cap C^2(\RR^+)\,:\, f''\in BUC(\RR^+)\}$, we see that Kovacs's condition (a) is not satisfied. On the other hand,  Lemma \ref{lem:convex} shows both that convex functions in $X$ may be conveniently described in terms of $S$, and that our semigroup preserves $S$-shape.

\end{example}

\begin{example} In $X=C[0,1]$, the operator $Af:=f''$ with domain $D(A):=\{f\in C^2[0,1]:\, f''(0)=f''(1)=0\}$, generates a Feller, analytic semigroup (compare e.g. Engel \cite{Eng:02a} and Liggett \cite[p. 17]{liggetts}).  Defining again $Sf:=f''$, $D(S):=C^2[0,1]$, we see that for $f\in D(A)$ with $f''\geq 0$,
\begin{equation}
S\e^{tA}f = A\e^{tA}f = \e^{tA}Af = \e^{tA} Sf \geq 0, \label{rach}
\end{equation}
because the semigroup $\sem{A}$ is positivity preserving. Hence,  $\sem{A}$ leaves $C_A$ invariant. By Lemma \ref{lem:convex} it follows that $C=C_A$, and our semigroup preserves convex functions.
\end{example}

We observe that the semigroup related to the heat equation with Neumann boundary conditions does not preserve convexity. To see this, note that the range of the semigroup is contained in the domain of the generator, but the only convex functions in the domain are the constant functions.

Finally, let us mention two examples where the choice of the space $Y$ is nontrivial.
\begin{example}
Consider the heat equation in $X= L^2(0,\pi)$ with Dirichlet boundary conditions. The related semigroup \sem{A} is not convexity preserving: for example, the function $f(x)=x^2$ (or any other positive convex function) is mapped into a positive function $g=\e^{tA} f\in D(A)$ (since \sem{A} is holomorphic), which cannot be convex because $g(0)=g(\pi)=0$ . However, Farag\'{o} and Pfeil  \cite{Farago-Pfeil} have proved that  if $f\in H^2(0,\pi)$ satisfies  $f\leq 0$ and $f'' \ge 0$, then $(\e^{tA}f)''\geq 0$ for $t\geq 0$.
To incorporate this example into our set-up, we introduce the space $Y:=L^2(0,\pi)\times L^2(0,\pi)$ and the operator $S:X\to Y$ given by
\begin{equation*}
Sf:=(-f,f''), \qquad  f\in H^2(0,\pi).
\end{equation*}
Then the cited result says that the set $\{f \in X: f\le0 \, , f \in H^2(0,\pi), f'' \ge 0\}$ is left invariant by $\sem{A}$, implying that the same is true for the closure of this set,  denoted by $C$ in agreement with Definition \ref{def:shape_pres}. It is easy to see that $C=C_A=\overline{\{f \in X: f\le0 \, , f \in D(A), f'' \ge 0\}},$ (it suffices to show that a negative linear function belongs to $C_A$) i.e. that \sem{A} preserves $S$-shape.

To interpret this result in other terms, we note that, by  Lemma \ref{lem:convex},
   the set $C$ is the closure of $\{f \in X; f \le 0, f \text{ convex}\}$,
since uniform convergence implies convergence in $X$.
On the other hand, \sem{A} being holomorphic, it maps $X$ into $D(A)\subset C[0,\pi]$. Hence, \sem{A} maps $C$ into $C\cap C[0,\pi]$. In particular, by Lemma \ref{lem:convex_lp}, it maps negative convex functions into negative convex functions.
\end{example}

\begin{example} We present a natural example where $Y$ is not a Banach lattice (compare our definition with the definition of Kov\'acs).
Let $\bar\Omega:=[0,1]\times [0,1]$, $X:=C_0(\bar\Omega)$, $Y=C(\bar\Omega,\CC^{2\times 2})$ be the space of continuous $(2\times 2)$-matrix-valued functions, and let $S$ be defined by $Sf:=f''$ (the Hessian matrix of $f$) and $D(S):=C_0(\bar\Omega)\cap C^2(\bar\Omega)$. We see that $Y$ is not a Banach lattice, though it is ordered pointwise through the ordering of positive definiteness. For $f\in D(S)$, convexity is characterized by the positive semidefinitenss of $f''(x)$ for all $x\in \bar\Omega$. Hence, by Remark \ref{rem:app_conv_square} we see that the set $C$ equals the convex functions in $X.$
\end{example}

We close this section by mentioning an important open problem of generalization of convexity preserving properties of the heat semigroup to higher dimensions.

\begin{problem}
Let $\Omega\subset \RR^N$ be a smooth convex set and consider the Dirichlet-heat semigroup. Is it true that negative convex functions are mapped into convex functions? Similarly, considering the heat equation with Wentzell boundary conditions as discussed in Engel \cite{Eng:02a}, is it true that convex functions are mapped into convex functions?
\end{problem}

See also the discussions in Korevaar \cite{korevaar} or in P.--L.~Lions and Musiela \cite{lions-musiela} related to this problem.

\section{Stability of $S$-shape preservability}\label{stability}

In this section, we investigate stability of shape preservability under approximations and perturbations. Though the results are quite straightforward corollaries of the definition, we list them in detail because they are of importance in applications. Throughout this section,  as in the definition of $S$-shape preservability, $X$ is a Banach space, $Y$ an ordered Banach space with closed positive cone, and $S:X\to Y$ is a closed linear operator.

We start with the Trotter-Kato approximation theorem.

\begin{theorem}
Assume that $(A,D(A))$ and $(A_n,D(A_n))$ are the generators of strongly continuous $S$-shape preserving operator semigroups, $\sem{A}$ and $\sem{A_n}\subset \cL(X)$, respectively, such that there are constants $M\geq 1$ and $\omega\in\RR$ such that $\|\e^{tA}\|,\, \|\e^{tA_n}\| \leq M\e^{\omega t}$, and that for some $\Re e\, \lambda>\omega$,
\begin{equation*}
(\lambda - A)^{-1} = \lim_{n\to \infty} (\lambda -A_n)^{-1} \qquad  (strongly).
\end{equation*}
Suppose that $A$ is compatible with $S$ (i.e., $C_A=C$). Then
$\sem{A}$ is $S$-shape preserving, as well.
\end{theorem}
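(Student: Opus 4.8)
The plan is to observe first that, by hypothesis, $A$ is compatible with $S$, i.e.\ $C_A=C$, so condition (a) of Definition~\ref{def:shape_pres} is already in place for $\sem{A}$; hence the whole task reduces to verifying condition (b), namely that $\sem{A}$ leaves $C$ invariant. The crucial point to exploit is that the set $C=\overline{\{x\in D(S):Sx\ge 0\}}$ depends only on $S$ and on the cone of $Y$, and \emph{not} on the generator; thus it is one and the same closed convex cone for $\sem{A}$ and for every $\sem{A_n}$. Since each $\sem{A_n}$ is $S$-shape preserving, we already know $\e^{tA_n}x\in C$ for every $x\in C$ and every $t\ge 0$.

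Next I would invoke the classical Trotter--Kato approximation theorem: the uniform bounds $\|\e^{tA}\|,\|\e^{tA_n}\|\le M\e^{\omega t}$ together with the strong convergence $(\lambda-A_n)^{-1}\to(\lambda-A)^{-1}$ for some $\Re e\,\lambda>\omega$ (and the fact that both $A$ and all $A_n$ are already known to be generators) imply that $\e^{tA_n}x\to \e^{tA}x$ for every $x\in X$, uniformly for $t$ in compact subintervals of $[0,\infty)$. Now fix $x\in C$ and $t\ge 0$. Then $\e^{tA_n}x\in C$ for all $n$ by the previous paragraph, and $\e^{tA_n}x\to\e^{tA}x$; since $C$ is closed, $\e^{tA}x\in C$. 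As $x\in C$ and $t\ge 0$ were arbitrary, $\sem{A}$ leaves $C$ invariant, which is (b), and together with (a) this shows $\sem{A}$ preserves $S$-shape.

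I do not expect a genuine obstacle here: the argument is essentially a one-line limit chase once Trotter--Kato is quoted, and this is exactly the kind of ``straightforward corollary of the definition'' the section advertises. The only point that deserves emphasis is the \emph{necessity} of assuming $A$ compatible with $S$: the invariance of $C$ passes to the limit for free because $C$ is closed and common to all $n$, but the equality $C_A=C$ is a property of the limit generator alone and is genuinely not inherited from the approximants (this is the phenomenon illustrated by the earlier examples with Kov\'acs's condition (a)). So the honest statement of the proof is: (b) is automatic from closedness of $C$ plus Trotter--Kato, while (a) has to be, and is, put in by hand through the compatibility hypothesis.
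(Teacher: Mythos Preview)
Your argument is correct and matches the paper's own proof essentially line for line: invoke Trotter--Kato to get $\e^{tA_n}x\to\e^{tA}x$, then use closedness of $C$ to pass the invariance to the limit, with condition (a) supplied directly by the compatibility hypothesis. The paper's proof is just a two-sentence version of what you wrote.
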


\begin{proof}
By the Trotter-Kato approximation theorem (see \cite[Theorem III.4.8.]{Engel-Nagel}), we have $T_n(t)x\to T(t)x$ as $n\to\infty$ for all $x\in X$.
Hence,  $C$ (being closed) is invariant for $\sem{A}$, as claimed.
\end{proof}

The Chernoff product formula yields another application:

\begin{theorem}
Assume that $(A,D(A))$ is the generator of a $C_0$-semigroup $\sem{A}$ and that  $V:\RR^+\to\cL(X)$ is a strongly continuous family of operators such that there exists $M\geq 1$ and $\omega\in\RR$ with
\begin{equation*}
\left\|V(t)^n\right\|\leq M\e^{tn\omega},
\end{equation*}
and that there is a core $D\subset D(A)$ such that
\begin{equation*}
\exists\lim_{h\to 0}\frac{V(h)x-x}{h} = Ax.
\end{equation*}
If $C=C_A$ and $
V(t)C\subset C,$
then $\sem{A}$ is $S$-shape preserving.
\end{theorem}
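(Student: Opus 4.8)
The plan is to reduce the statement to the Chernoff product formula, in exactly the way the preceding theorem was reduced to Trotter--Kato. First I would check that the hypotheses on $V$ are precisely the ones Chernoff's theorem requires: the growth bound $\|V(t)^n\|\le M\e^{tn\omega}$ taken with $n=1$ shows that $V(\cdot)$ is bounded on a neighbourhood of $0$, and the existence of the strong limit $\lim_{h\to 0}h^{-1}(V(h)x-x)=Ax$ for $x$ in the core $D\subset D(A)$ forces $V(h)x\to x$ as $h\to 0$, first for $x\in D$ and then, by density of $D$ together with the uniform bound, for every $x\in X$. Hence the Chernoff product formula (see \cite[Section III.5]{Engel-Nagel}) applies and gives
\begin{equation*}
\e^{tA}x=\lim_{n\to\infty}V\!\left(\tfrac tn\right)^{\!n}x\qquad\text{for all }x\in X,
\end{equation*}
with convergence uniform for $t$ in compact subsets of $\RR^+$.

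Next I would exploit the invariance hypothesis $V(t)C\subset C$. Since this holds for every $t\ge 0$, iteration yields $V(t/n)^nC\subset C$ for all $n\in\NN$. As $C$ is closed (it is a closure, by \eqref{zbiorc}) and, for $x\in C$, the vector $\e^{tA}x$ is the limit of the sequence $\bigl(V(t/n)^nx\bigr)_{n}\subset C$, we get $\e^{tA}x\in C$. Thus $\sem{A}$ leaves $C$ invariant, which is condition (b) of Definition \ref{def:shape_pres}; condition (a), namely $C=C_A$, is assumed in the statement. Therefore $\sem{A}$ is $S$-shape preserving.

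I do not anticipate a genuine obstacle: once the set-up is in place, the conclusion is a formal consequence of the definition combined with a classical approximation theorem, entirely parallel to the Trotter--Kato result above. The only point deserving a line of care is the verification that $V(h)\to I$ strongly, so that Chernoff's theorem is legitimately applicable; as indicated, this follows from differentiability on the dense core and the uniform boundedness furnished by the growth bound with $n=1$. (Alternatively one may simply list $V(0)=I$ among the hypotheses, which costs nothing in the applications of Section~\ref{four}.)
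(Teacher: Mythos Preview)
Your argument is correct and follows essentially the same route as the paper: invoke the Chernoff product formula to get $V(t/n)^n x\to \e^{tA}x$, and then use $V(t)C\subset C$ together with closedness of $C$ to conclude. The paper's proof is in fact terser than yours---it simply cites Chernoff and the closedness of $C$---so your additional lines verifying $V(h)\to I$ strongly are extra care rather than a deviation in strategy.
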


\begin{proof}
By the Chernoff product formula \cite[Theorem III.5.2]{Engel-Nagel}, we have
\begin{equation*}
V(t/n)^n x\to \e^{tA} x
\end{equation*}
for all $x\in X$. Hence, the claim follows since $C$ is closed. \end{proof}

An important consequence is that certain time-discretizations of $S$-shape preserving semigroups are $S$-shape preserving. This is enormously important in numerical problems: if a differential equation preserves a quantity we aim for numerical methods preserving the same quantity. Such numerical methods are  called geometric integrators; see Hairer et al. \cite{Hairer-Lubich-Wanner} for the corresponding theory for ordinary differential equations.

\begin{corollary}
Assume that $(A,D(A))$ and $(B,D(B))$ generate $S$-shape preserving semigroups $\sem{A}$ and $\sem{B}$, and $\overline{(A+B, D(A+B))}$ generates a $C_0$-semigroup $(U(t))_{t\geq 0}$. If there exists $M\geq 1$, $\omega\in\RR$ such that
\begin{equation*}
\left\| \left (\e^{tA}\e^{tB}\right)^n \right\| \leq M \e^{nt\omega},
\end{equation*}
then $(U(t))_{t\geq 0}$ is $S$-shape preserving. Moreover, in each time-step the sequential and the Strang splittings are $S$-shape preserving, i.e., for each $x\in C$,
\begin{equation*}
u^{sq}:=\left(\e^{\frac{t}{n}A} \e^{\frac{t}{n}B}\right)^n x\in C,
\end{equation*}
and
\begin{equation*}
u^{St}:=\left(\e^{\frac{t}{2n}B} \e^{\frac{t}{n}A}\e^{\tfrac{t}{2n}B} \right)^n x\in C.
\end{equation*}
\end{corollary}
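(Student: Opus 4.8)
The plan is to apply the preceding theorem (the Chernoff product formula version) with $V(t):=\e^{tA}\e^{tB}$ and with $\overline{A+B}$ playing the role of the generator; the statements on the sequential and Strang splittings will then follow from invariance of $C$ alone, with no passage to the limit.

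First I would note that $V$ leaves $C$ invariant: since $\sem{B}$ is $S$-shape preserving, $\e^{tB}C\subset C$ for all $t\ge0$, and since $\sem{A}$ is $S$-shape preserving, $\e^{tA}C\subset C$ for all $t\ge0$, so $V(t)C=\e^{tA}\bigl(\e^{tB}C\bigr)\subset\e^{tA}C\subset C$. Then I would check the remaining hypotheses of the theorem: $t\mapsto V(t)x=\e^{tA}\e^{tB}x$ is continuous (because $t\mapsto\e^{tB}x$ is, and $\e^{tA}$ depends on its argument continuously, uniformly for $t$ in compact sets, thanks to the exponential bound) and $V(0)=I$; the estimate $\|V(t)^n\|\le M\e^{tn\omega}$ is the standing assumption; and on the core $D:=D(A+B)=D(A)\cap D(B)$ of $\overline{A+B}$ one computes, for $x\in D$,
\[
\frac{V(h)x-x}{h}=\e^{hA}\,\frac{\e^{hB}x-x}{h}+\frac{\e^{hA}x-x}{h}\longrightarrow Bx+Ax=\overline{A+B}\,x\qquad(h\to0^+),
\]
using once more that $\e^{hA}\to I$ strongly and is bounded near $0$. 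Granting the compatibility $C=C_{\overline{A+B}}$, the theorem applies and yields that $(U(t))_{t\ge0}$ is $S$-shape preserving; in particular $V(t/n)^n x\to U(t)x$ for every $x\in X$.

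For the splitting formulas no convergence is needed. If $x\in C$, then $u^{sq}=\bigl(\e^{\frac tn A}\e^{\frac tn B}\bigr)^n x$ is obtained from $x$ by applying finitely many operators of the form $\e^{sA}$ or $\e^{sB}$ with $s\ge0$, each mapping $C$ into $C$; hence $u^{sq}\in C$ (equivalently $u^{sq}=V(t/n)^n x$ and $V(t/n)C\subset C$). The identical finite-composition argument applied to $\e^{\frac t{2n}B}\e^{\frac tn A}\e^{\frac t{2n}B}$ gives $u^{St}\in C$ for $x\in C$; here not even a norm bound for the Strang composition is required.

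The step I expect to be the real obstacle is justifying $C=C_{\overline{A+B}}$, which the preceding theorem needs in full. Invariance of $C$ only gives a priori that $U(t)x$ lies in the \emph{closure} of $\{\,y\in D(S):Sy\ge0\,\}$, not that $U(t)x\in D(S)$ with $S\,U(t)x\ge0$, so one cannot simply regularize through $\frac1h\int_0^h U(s)x\,\ud s$ and keep control of $S$; moreover $C=C_A$ and $C=C_B$ do not by themselves force $C=C_{\overline{A+B}}$, since approximants of $x\in C$ drawn from $D(A)\cap D(S)$ need not lie in $D(B)$. As with the Miyadera--Voigt perturbation theorem, in the concrete settings of Section \ref{four} this compatibility is verified directly, and the corollary is to be read with this condition in force.
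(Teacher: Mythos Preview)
Your approach is the same as the paper's: apply the Chernoff--product--formula theorem with $V(t)=\e^{tA}\e^{tB}$, verify invariance of $C$ and consistency on $D(A)\cap D(B)$, and read off $u^{sq},u^{St}\in C$ from finite compositions of $C$--preserving maps. The paper's own proof is terser---it simply cites Engel--Nagel for the consistency of the sequential splitting and Csom\'os--Nickel for the Strang stability bound---but the structure is identical.

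Two remarks. First, your flagging of the compatibility condition $C=C_{\overline{A+B}}$ is well taken: the preceding theorem needs it, and the corollary as stated does not list it among the hypotheses; the paper does not address this point either, so your reading of the corollary ``with this condition in force'' is the right one. Second, the paper invokes a stability estimate for the Strang scheme (via \cite{Csomos-Nickel}), presumably with an eye toward its convergence to $U(t)$; as you note, for the bare claim $u^{St}\in C$ no such bound is needed, so your direct finite-composition argument is both correct and slightly more economical than what the paper sketches.
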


\begin{proof}
The stability for the Strang splitting follows from 
\cite[Lemma 2.3]{Csomos-Nickel}. The consistency for the sequential splitting is in 
\cite[Corollary III.5.8]{Engel-Nagel}, for the Strang splitting it is a straightforward modification.
\end{proof}
Note that the sequential splitting is usually referred to as the Lie product formula.

\bigskip
We turn now our attention to perturbation problems.

\begin{corollary}\label{corone}
Assume that $(A,D(A))$  generates an $S$-shape preserving semigroup $\sem{A}$ and that $B\in\cL(X)$ is a bounded operator leaving $C$ invariant. Then the semigroup generated by $(A+B,D(A))$ is $S$-shape preserving.
\end{corollary}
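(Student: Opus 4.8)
The plan is to realize the perturbed semigroup as a limit of a Dyson--Phillips-type series and then invoke the fact that $C$ is closed and convex together with the two previously established stability results (the Trotter--Kato and Chernoff theorems above). Concretely, since $B$ is bounded, $(A+B,D(A))$ generates a $C_0$-semigroup $(U(t))_{t\ge 0}$, and the hypothesis $C=C_A$ (which is part of the assumption that $\sem{A}$ is $S$-shape preserving) is inherited, so we only need to show that $(U(t))_{t\ge0}$ leaves $C$ invariant.

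First I would use the Lie--Trotter product formula in the form of the Chernoff theorem already proved: the family $V(t):=\e^{tA}\e^{tB}$ is strongly continuous, satisfies an exponential bound $\|V(t)^n\|\le M'\e^{n t\omega'}$ because $A$ generates a $C_0$-semigroup and $B$ is bounded, and $\lim_{h\to 0}h^{-1}(V(h)x-x)=(A+B)x$ on $D(A)$, which is a core for $A+B$. Hence $V(t/n)^n x\to U(t)x$ for every $x\in X$. Second, I would check that $V(t)$ maps $C$ into $C$: for $x\in C$ we have $\e^{tB}x\in C$ by hypothesis on $B$, and then $\e^{tA}(\e^{tB}x)\in C$ because $\sem{A}$ preserves $S$-shape; therefore $V(t)C\subset C$, and iterating, $V(t/n)^n C\subset C$. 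Since $C$ is closed, passing to the limit gives $U(t)x\in C$ for all $x\in C$ and all $t\ge0$, i.e.\ $(U(t))_{t\ge 0}$ leaves $C$ invariant.

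Alternatively, and perhaps more cleanly, one can invoke the second theorem above (the Chernoff application) directly: it has exactly the hypotheses one needs once $V(t)=\e^{tA}\e^{tB}$ is fixed and $C=C_A$ is assumed, so the conclusion is immediate. Either route reduces the corollary to verifying the two bookkeeping points: the exponential estimate on $V(t)^n$ and the core/consistency condition, both of which are standard perturbation facts for bounded $B$ (see \cite[Chapter III]{Engel-Nagel}).

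I do not expect any genuine obstacle here; the only mild subtlety is confirming that $V(t)C\subset C$ requires \emph{both} that $B$ leaves $C$ invariant \emph{and} that $\sem{A}$ does, in the right order ($\e^{tB}$ first, then $\e^{tA}$), and that the exponential growth bound survives composition. The closedness of $C$ is what makes the limit argument work, and that is exactly why Definition \ref{def:shape_pres} builds the closure into $C$ in the first place.
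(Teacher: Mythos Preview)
Your approach via the Lie--Trotter/Chernoff product formula is exactly the paper's, and your identification of the two bookkeeping points is on the mark. The paper fills them in explicitly rather than deferring: for $\e^{tB}C\subset C$ it invokes the exponential series together with $C$ being a closed cone (the hypothesis is only $BC\subset C$, not $\e^{tB}C\subset C$), and for the stability bound it passes to an equivalent norm via \cite[Lemma~II.3.10]{Engel-Nagel} in which $\sem{A}$ becomes quasi-contractive --- this is the substantive step, since the naive estimate $\|(\e^{tA}\e^{tB})^n\|\le M^n\e^{n(\omega+\|B\|)t}$ is useless when $M>1$.
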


\begin{proof} Since $C$ is closed,
it is clear that the (semi-)group
$
\e^{tB}:=\sum_{n=0}^{\infty} \frac{(tB)^n}{n!}$
is $S$-shape preserving. The stability of the Lie product formula follows from the following considerations. Let $M\geq 1$, $\omega\in\RR$ be such that $\|\e^{tA} \| \leq M\e^{t\omega}$. Introduce the new, equivalent norm as in 
\cite[Lemma II.3.10]{Engel-Nagel} such that ${|\!|\!|}\e^{tA} {|\!|\!|}\leq \e^{t\omega}$. Since ${|\!|\!|}\e^{tB}{|\!|\!|}\leq \e^{t{|\!|\!|}B{|\!|\!|}}$, the statement follows by
\begin{equation*}
\left\| (\e^{\frac{t}{n}A}\e^{\frac{t}{n}B})^n x\right\| \leq \big{|\!\big|\!\big|} (\e^{\frac{t}{n}A}\e^{\frac{t}{n}B})^n x \big{|\!\big|\!\big|}\leq \e^{(\omega + {|\!|\!|}B{|\!|\!|})t}{|\!|\!|}x{|\!|\!|}\leq M\e^{(\omega + {|\!|\!|}B{|\!|\!|})t}\|x\|.
\end{equation*}
\end{proof}

\begin{example}
If $A$ is the generator of a L\'evy process semigroup in $X= BUC(\mathbb R)$ and $B$ is the multiplication operator related to a non-increasing function $\beta$ in $X$, then the semigroup generated by $A+ B$ preserves monotonicity.
\end{example}

We can relax the boundedness of $B$ to allow Miyadera-type perturbations.

\begin{theorem}\label{thm:miyadera}
Assume that $(A,D(A))$  generates an $S$-shape preserving semigroup $\sem{A}$ and that $B\in\cL(D(A),X)$ is such that for all $x\in D(A)\cap D(S)$ with $Sx\geq 0$, we have
\begin{equation*}
Bx\in C.
\end{equation*}
If further there is a $q\in(0,1)$ and $t_0>0$ such that
\begin{equation*}
\int_0^{t_0} \|B\e^{tA} x\|\ud t \leq q\|x\| \quad \text{ for all } x\in D(A),
\end{equation*}
then $(A+B,D(A))$ generates an $S$-shape preserving semigroup.
\end{theorem}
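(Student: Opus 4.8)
The natural plan is to feed the problem into the Miyadera--Voigt machinery and then propagate invariance of $C$ through the Dyson--Phillips series. First I would invoke the Miyadera--Voigt perturbation theorem (\cite[Corollary III.3.16]{Engel-Nagel}): the integral estimate with $q\in(0,1)$ guarantees that $(A+B,D(A))$ generates a strongly continuous semigroup $(U(t))_{t\ge0}$, representable as $U(t)=\sum_{n\ge0}U_n(t)$ with $U_0(t)=\e^{tA}$ and $U_{n+1}(t)x=\int_0^t U_n(t-s)B\e^{sA}x\ud s$ for $x\in D(A)$, each $U_n(t)$ extending to a bounded operator on $X$ and the series converging in operator norm, uniformly on compact $t$-intervals. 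Condition (a) of Definition \ref{def:shape_pres} is then automatic: since $D(A+B)=D(A)$ one has $C_{A+B}=\overline{\{x\in D(A)\cap D(S):Sx\ge0\}}=C_A$, and $C_A=C$ because $\sem A$ is $S$-shape preserving --- this is exactly the point where condition (a) of the definition is needed.

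It remains to prove that $(U(t))_{t\ge0}$ leaves $C$ invariant. Note first that $C$ is a closed convex cone: the set $\{x\in D(S):Sx\ge0\}$ is stable under addition and nonnegative scalar multiples because $S$ is linear and the positive cone of $Y$ is a convex cone, and passing to the closure preserves this; hence $C$ is stable under norm-limits, finite sums, and Bochner integrals of almost-everywhere $C$-valued integrable functions. Write $\tilde C:=\{x\in D(A)\cap D(S):Sx\ge0\}$, which by condition (a) is dense in $C$. Since each $U(t)$ is bounded, it suffices to show $U(t)\tilde C\subseteq C$, and by the Dyson--Phillips representation together with the stability properties just listed it suffices to prove by induction on $n$ that $U_n(t)C\subseteq C$ for all $t\ge0$. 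The case $n=0$ is precisely the invariance of $C$ under $\sem A$. For the inductive step fix $x\in\tilde C$; by shape preservation $\e^{sA}x\in C$, and I claim $B\e^{sA}x\in C$ (see below). Granting this, the induction hypothesis yields $U_n(t-s)B\e^{sA}x\in C$ for every $s\in[0,t]$; since $s\mapsto B\e^{sA}x$ is continuous and $L^1$-controlled by the Miyadera estimate while $\|U_n(t-s)\|$ is bounded on $[0,t]$, the map $s\mapsto U_n(t-s)B\e^{sA}x$ is Bochner integrable, so $U_{n+1}(t)x=\int_0^t U_n(t-s)B\e^{sA}x\ud s\in C$. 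Extending from $\tilde C$ to $C$ by density and boundedness of $U_{n+1}(t)$ closes the induction, and summing the series (partial sums lie in $C$, which is closed) gives $U(t)\tilde C\subseteq C$, hence $U(t)C\subseteq C$.

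The one step with real content --- and the place I expect to be the main obstacle --- is the claim $B\e^{sA}x\in C$ for $x\in\tilde C$. The hypothesis on $B$ concerns elements of $\tilde C$, i.e.\ elements of $D(A)\cap D(S)$ with non-negative $S$-image, whereas $\e^{sA}x$ is a priori known only to lie in $D(A)\cap C$; promoting it back into $\tilde C$ amounts to showing that $\sem A$ leaves $\tilde C$ itself invariant (not merely its closure $C$), i.e.\ that $\e^{sA}$ sends $D(S)$-elements with non-negative image to $D(S)$-elements with non-negative image. I would isolate this as a lemma and prove it under the standing assumptions; it is immediate under Kov\'acs' stronger conditions (where $D(A)\subseteq D(S)$ and $S\e^{sA}x\ge0$ whenever $Sx\ge0$) and holds for the concrete semigroups treated in Section \ref{four}. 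Once it is available, the rest is bookkeeping. As a variant sidestepping the Dyson--Phillips series, one can run the same induction on the Euler approximants $\bigl(\tfrac{n}{t} R(\tfrac{n}{t},A+B)\bigr)^n x\to U(t)x$, using the Neumann expansion $R(\lambda,A+B)=\sum_{k\ge0}R(\lambda,A)\bigl(BR(\lambda,A)\bigr)^k$ (valid for large $\lambda$ by the Miyadera estimate) and the fact that $R(\lambda,A)=\int_0^\infty\e^{-\lambda\sigma}\e^{\sigma A}\ud\sigma$ maps $C$ into $C$; the same key point --- now that $R(\lambda,A)x\in\tilde C$ for $x\in\tilde C$ --- reappears and must be handled in the same way.
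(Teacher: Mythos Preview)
Your plan is exactly the paper's: invoke Miyadera--Voigt to obtain the Dyson--Phillips expansion $U(t)=\sum_{n\ge0}U_n(t)$ with $U_0(t)=\e^{tA}$ and $U_{n+1}(t)x=\int_0^t U_n(t-s)B\e^{sA}x\,\ud s$, then argue by induction that each $U_n(t)$ sends $C_A$ into $C_A=C$, and conclude by closedness of $C$. The paper's entire argument is two sentences and does \emph{not} isolate or justify the step you flag; it simply writes ``By induction argument, since $C_A$ is closed, we have $U_n(t)C_A\subset C_A$, hence $U(t)C_A\subset C_A$. Note that here we use heavily that $C_A=C$.''

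So the subtlety you have put your finger on --- that for $x\in\tilde C$ one only knows $\e^{sA}x\in D(A)\cap C$, whereas the hypothesis on $B$ is stated on $\tilde C=D(A)\cap D(S)\cap\{S\,\cdot\ge0\}$, and the closure defining $C_A$ is taken in the $X$-norm rather than the graph norm --- is real, and the paper glosses over it rather than resolving it. Your diagnosis that what is actually being used is invariance of $\tilde C$ itself under $\sem A$ (immediate under Kov\'acs' condition $D(A)\subset D(S)$, and verified directly in the concrete setting of Section~\ref{four}) is accurate; that is effectively the tacit assumption behind the paper's brief proof. Your write-up is thus more careful than the paper's, not less; the resolvent/Euler variant you sketch at the end does not appear in the paper.
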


\begin{proof}
By the perturbation theorem of Miyadera-Voigt \cite[Theorem III.3.14 and Corollary III.3.16]{Engel-Nagel} $(A+B,D(A))$ generates a strongly continuous semigroup $(U(t))_{t\geq 0}$, which is given by the Dyson-Phillips series
\begin{equation*}
U(t)x = \sum_{n=0}^{\infty} U_n(t)x,
\end{equation*}
where $U_0(t)=\e^{tA}$ and
\begin{equation*}
U_n(t)x=\int_0^t U_{n-1}(t-s)B\e^{sA}x \ud s \quad \text{ for all } x\in D(A).
\end{equation*}
By induction argument, since $C_A$ is closed, we have $U_n(t)C_A\subset C_A$, hence $U(t)C_A\subset C_A$. Note that here we use heavily that $C_A=C$.
\end{proof}

\section{Delay equations}\label{four}

Since many physical processes depend on a former state of the system as well, they have to be described by \emph{partial delay differential equations} containing a term  depending on the \emph{history function}. Although these partial differential equations cannot be written as an abstract Cauchy problem on the original state space $X$, their solutions can be obtained by an operator semigroup on an appropriate function space (called \emph{phase space}). For a systematic treatment of the problem we refer to the monograph B\'{a}tkai and Piazzera \cite{Batkai-Piazzera}, which will be our main reference here.

\noindent Consider the \emph{abstract delay equation} in the following form (see, e.g., B\'{a}tkai and Piazzera \cite{Batkai-Piazzera}):
\begin{equation}
\left\{
\begin{aligned}
\frac{\mathrm{d}u(t)}{\mathrm{d}t}&=Bu(t)+\Phi u_t, \qquad t\ge 0, \\
u(0)&=x\in X, \\
u_0&=f\in\mathrm{L}^p\big([-1,0],X\big)
\end{aligned}
\right.
\tag{DE}
\label{delay}
\end{equation}
on the Banach space $X$, where $\big(B,D(B)\big)$ is a generator of a strongly continuous semigroup on $X$, $1< p<\infty$, and $\Phi:\mathrm{W}^{1,p}\big([-1,0],X\big) \to X$ is a bounded and linear operator. The \emph{history function} $u_t$ is defined by $u_t(\sigma):=u(t+\sigma)$ for $\sigma\in[-1,0]$.

Our main assumptions will be the following.
\begin{enumerate}
\item The operator $(B,D(B))$ is $S$-shape preserving.
\item There is $\eta\in BV([-1,0],\cL(X))$ such that
\begin{equation*}
\Phi f:=\int_{-1}^0 d\eta(s) f(s).
\end{equation*}
\item We have that $\eta(s)C\subset C$.
\end{enumerate}

We start with the following abstract statement.
\begin{theorem}
The solutions of the delay equation \eqref{delay} are $S$-shape preserving, i.e., for all initial values $x\in C$, $f\in \mathrm{L}^p([-1,0],C)$, we have that $u(t)\in C$.
\end{theorem}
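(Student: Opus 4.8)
The plan is to realize the delay equation as an abstract Cauchy problem on the phase space $\mathcal{X}:=X\times \mathrm{L}^p([-1,0],X)$, following B\'{a}tkai and Piazzera \cite{Batkai-Piazzera}, and then to identify a suitable closed operator $\mathcal{S}$ on $\mathcal{X}$ whose associated cone $\mathcal{C}$ is exactly $C\times \mathrm{L}^p([-1,0],C)$, so that ``$S$-shape preserving on $\mathcal{X}$'' for the delay semigroup means precisely the invariance statement we want. Concretely, the generator of the delay semigroup is
\begin{equation*}
\mathcal{A}\begin{pmatrix} x \\ f\end{pmatrix} = \begin{pmatrix} Bx + \Phi f \\ f'\end{pmatrix},
\end{equation*}
with domain $\{(x,f): f\in \mathrm{W}^{1,p}([-1,0],X),\, x=f(0),\, x\in D(B)\}$; the first component of $e^{t\mathcal{A}}(x,f)$ is the solution $u(t)$ and the second is the history segment $u_t$. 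Thus proving $e^{t\mathcal{A}}\mathcal{C}\subset\mathcal{C}$ yields $u(t)\in C$ for $x\in C$, $f\in\mathrm{L}^p([-1,0],C)$.

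Next I would split $\mathcal{A}=\mathcal{A}_0+\mathcal{B}$, where $\mathcal{A}_0(x,f)=(Bx,f')$ is the ``undelayed'' generator and $\mathcal{B}(x,f)=(\Phi f,0)$ is the delay perturbation, and verify the hypotheses of the Miyadera--Voigt perturbation result, Theorem \ref{thm:miyadera} (or Corollary \ref{corone} when $\Phi$ is bounded on $\mathrm{L}^p$), applied on $\mathcal{X}$ with the operator $\mathcal{S}$. The first task is to show $\mathcal{A}_0$ is $\mathcal{S}$-shape preserving: the semigroup generated by $\mathcal{A}_0$ is the (decoupled) product of $e^{tB}$ on $X$ with the nilpotent left-shift semigroup on $\mathrm{L}^p([-1,0],X)$ glued by the boundary condition; since $(B,D(B))$ preserves $S$-shape by Assumption (1) and the shift part merely translates the history (mapping $\mathrm{L}^p([-1,0],C)$ into itself), its action on $\mathcal{C}$ stays in $\mathcal{C}$ — this is where Lemma \ref{lem:monot}-type closure arguments enter to pass from a core to all of $D(\mathcal{A}_0)$. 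The second task is the perturbation condition: by Assumptions (2)--(3), for $(x,f)$ with $\mathcal{S}(x,f)\ge 0$, i.e. $x\in C$ and $f(s)\in C$ a.e., we get $\Phi f=\int_{-1}^0 d\eta(s)f(s)\in C$ because $C$ is a closed convex cone and $\eta(s)C\subset C$ (the integral is a limit of Riemann sums $\sum (\eta(s_{i+1})-\eta(s_i))f(s_i^*)$, each term of which lies in $C$ by monotonicity of $\eta$ in the cone order; one must be slightly careful that $\eta$ having bounded variation and $\eta(s)C\subset C$ for all $s$ forces the increments to act positively on $C$). Hence $\mathcal{B}(x,f)=(\Phi f,0)\in\mathcal{C}$.

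Finally I would check the quantitative smallness hypothesis: $\int_0^{t_0}\|\mathcal{B}e^{t\mathcal{A}_0}(x,f)\|\,dt\le q\|(x,f)\|$ for some $q<1$, $t_0>0$. This is standard in delay-equation theory — it follows from the boundedness of $\Phi$ on $\mathrm{W}^{1,p}$, Sobolev-type estimates on the shift semigroup, and choosing $t_0$ small (this is precisely the estimate underlying the well-posedness of \eqref{delay} in \cite{Batkai-Piazzera}); alternatively one invokes the Dyson--Phillips series directly as in the proof of Theorem \ref{thm:miyadera}. The compatibility condition $\mathcal{C}_{\mathcal{A}}=\mathcal{C}$ needed for the Miyadera--Voigt argument must also be verified: one shows every element of $\mathcal{C}$ is approximated by elements $(x,f)\in D(\mathcal{A})$ with $\mathcal{S}(x,f)\ge0$, which reduces, via Assumption (1)'s compatibility $C_B=C$ and a density argument for the history component in $\mathrm{W}^{1,p}([-1,0],C)$ with matching boundary value, to the one-dimensional compatibility already assumed. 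The main obstacle I anticipate is this last bookkeeping: correctly choosing $\mathcal{S}$ (likely $\mathcal{S}(x,f):=(Sx, Sf(\cdot))$ on the appropriate domain, or a product operator) so that simultaneously $\mathcal{C}$ has the desired product form, $\mathcal{S}$ is closed, and $\mathcal{C}_{\mathcal{A}}=\mathcal{C}$ holds — reconciling the boundary coupling $x=f(0)$ in $D(\mathcal{A})$ with the product structure of $\mathcal{C}$ is the delicate point, and it is exactly the place where the auxiliary Appendix lemmas on describing cones in function spaces do the real work.
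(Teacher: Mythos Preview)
Your proposal is correct and follows essentially the same route as the paper: lift to the phase space $X\times \mathrm{L}^p([-1,0],X)$, take the diagonal operator $\mathcal{S}=\mathrm{diag}(S,\,S\otimes Id)$ with cone $\mathcal{C}=C\times \mathrm{L}^p([-1,0],C)$, split $\mathcal{A}$ into the undelayed generator plus the delay perturbation, and apply Theorem~\ref{thm:miyadera}. In fact you spell out several verification steps (the Miyadera estimate, the compatibility $\mathcal{C}_{\mathcal{A}}=\mathcal{C}$, and the positivity of the Stieltjes increments) that the paper's proof leaves implicit; your caution about whether $\eta(s)C\subset C$ really forces the \emph{increments} $d\eta$ to preserve $C$ is well placed and is glossed over in the paper as well.
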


\begin{proof}
In order to rewrite \eqref{delay} as an abstract Cauchy problem, we take the product space $\mathcal{E}:=X\times \mathrm{L}^p\big([-1,0],X\big)$ and the new unknown function as
\begin{equation}
t\mapsto\mathcal{U}(t):=\binom{u(t)}{u_t}\in\mathcal{E}.
\nonumber
\end{equation}
Then (\ref{delay}) can be written as an abstract Cauchy problem on the space $\mathcal{E}$ in the following way:
\begin{equation}
\left\{
\begin{aligned}
\frac{\mathrm{d}\mathcal{U}(t)}{\mathrm{d}t}&=\mathcal{A}\mathcal{U}(t), \qquad t\ge 0, \\
\mathcal{U}(0)&=\tbinom{x}{f}\in\mathcal{E},
\end{aligned}
\right.
\tag{$\mathcal{ACP}$}
\label{acp_delay}
\end{equation}
where the operator $\big(\mathcal{A},D(\mathcal{A})\big)$ is given by the matrix
\begin{equation}
\mathcal{A}:=\left(\begin{array}{cc} B & \Phi \\ 0 & \frac{d}{d\sigma} \end{array}\right)
\end{equation}
on the domain
\begin{equation}
D(\mathcal{A}):=\left\{\tbinom{x}{f}\in D(B)\times \mathrm{W}^{1,p}\big([-1,0],X\big): \ f(0)=x  \right\}.
\nonumber
\end{equation}
It is shown in B\'{a}tkai and Piazzera \cite[Corollary 3.5, Proposition 3.9]{Batkai-Piazzera} that the delay equation \eqref{delay} and the abstract Cauchy problem \eqref{acp_delay} are equivalent, i.e., they have the same solutions. More precisely, the first coordinate of the solution of \eqref{acp_delay} always solves \eqref{delay}. Due to this equivalence, the delay equation is well-posed if and only if the operator $\big(\mathcal{A},D(\mathcal{A})\big)$ generates a strongly continuous semigroup on the space $\mathcal{E}$.

Further, it was also shown in \cite{Batkai-Piazzera} that
\begin{equation*}
\cA=\cA_1+\cA_2,
\end{equation*}
where
\begin{equation*}
\mathcal{A}_1:=\left(\begin{array}{cc} B & 0 \\ 0 & \frac{d}{d\sigma} \end{array}\right),
\end{equation*}
with $D(\cA_1):=D(\cA)$, and
\begin{equation*}
\mathcal{A}_2:=\left(\begin{array}{cc} 0 & \Phi \\ 0 & 0 \end{array}\right)
\end{equation*}
with $D(\cA_2)=X\times \mathrm{W}^{1,p}([-1,0],X)$, and that $\cA_2$ satisfies the conditions of the Miyadera-Voigt perturbation theorem.

Defining $\mathcal{C}:=C\times \mathrm{L}^{p}([-1,0],C)$ and
\begin{equation*}
\mathcal{S}:=\left(\begin{array}{cc} S & 0 \\ 0 & S\otimes Id \end{array}\right)
\end{equation*}
mapping to $Y\times L^p([-1,0],Y)$, we see that $\cA_1$ and $\cA_2$ satisfy the conditions of Theorem \ref{thm:miyadera}.
\end{proof}

Using this abstract result, we are able to deal with a large class of partial differential equations with delay. As an illustration, we give here two examples.

\begin{corollary}
Consider the transport equation
\begin{align*}
\partial_t u(t,x) &= \partial_x u(t,x) + c u(t-\tau,x), \quad t\geq0,\, \ x\geq 0,\\
u(s,x) &= f(s,x),\quad s\in [-\tau,0], \quad x\geq 0,
\end{align*}
where $\beta:\RR^+\to \RR^+$ is a monotonically increasing bounded continuous function. If $f(s,\cdot)$ is an increasing (decreasing) function for all $s\in [-\tau,0]$, then $u(t,\cdot)$ is a monotonically increasing (decreasing) function for all $t\geq 0$.
\end{corollary}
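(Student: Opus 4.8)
The plan is to place this equation in the setting of the abstract delay theorem proved just above, choosing the space and the operator $S$ so that the assertion ``$u(t,\cdot)$ is monotone'' becomes ``$u(t)\in C$''.

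First I would fix the data as follows. Put $X:=BUC(\RR^+)$; for the statement about non-decreasing functions take $S:=\tfrac{\ud}{\ud x}$ with domain $X^1=\{f\in BUC(\RR^+):f'\in BUC(\RR^+)\}$, and for non-increasing functions take $S:=-\tfrac{\ud}{\ud x}$ with the same domain, the two arguments being entirely parallel. The generator called ``$B$'' in the abstract delay equation is here $\partial_x$ with domain $X^1$, which generates the left shift $(\e^{t\partial_x}f)(x)=f(x+t)$. After the harmless rescaling $\sigma\mapsto\sigma/\tau$ (which replaces $\partial_x$ by $\tau\partial_x$ and $c$ by $\tau c$ and changes nothing below), the history space becomes $\mathrm{L}^p([-1,0],X)$ and the delay term is the bounded operator $\Phi f:=c\,f(-1)$ on $\mathrm{W}^{1,p}([-1,0],X)$.

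Next I would check the three standing assumptions preceding the abstract delay theorem. For Assumption (1): since $D(\partial_x)=D(S)=X^1$, the compatibility $C_{\partial_x}=C$ holds automatically, and since the left shift maps a $C^1$ monotone function to a $C^1$ monotone function it leaves $C=\overline{\{f\in X^1:Sf\ge 0\}}$ invariant, so $(\partial_x,D(\partial_x))$ is $S$-shape preserving in the sense of Definition \ref{def:shape_pres}; moreover, by Lemma \ref{lem:monot} in the Appendix, $C$ coincides with the set of all monotone functions in $X$. For Assumption (2): $\Phi f=c\,f(-1)=\int_{-1}^0\ud\eta(s)\,f(s)$, where $\ud\eta$ is the operator-valued measure $c\,\mathrm{Id}\cdot\delta_{-1}$, so $\eta\in BV([-1,0],\cL(X))$. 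For Assumption (3): the values of $\eta$ are the operators $0$ and $c\,\mathrm{Id}$, and since $c\ge 0$ and $C$ is a cone, $\eta(s)C\subset C$ for every $s$. (If one prefers to read the coefficient as a non-negative, non-decreasing bounded continuous function $\beta(x)$ rather than a constant $c\ge 0$, then $\Phi f=\beta\cdot f(-1)$ and Assumption (3) still holds, but only for the smaller cone of non-negative monotone functions, which multiplication by such a $\beta$ preserves; for a constant $c\ge 0$ no sign restriction on the data is needed.)

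Finally, the abstract delay theorem applies --- internally through the Miyadera--Voigt mechanism of Theorem \ref{thm:miyadera} --- and yields: for $x\in C$ and $f\in\mathrm{L}^p([-1,0],C)$, the solution $\mathcal U(t)=\binom{u(t)}{u_t}$ of the abstract Cauchy problem equivalent to \eqref{delay} stays in $\mathcal C=C\times\mathrm{L}^p([-1,0],C)$. By the identification of $C$ above, the hypothesis that $f(s,\cdot)$ is monotone for every $s\in[-\tau,0]$ is precisely $f\in\mathrm{L}^p([-1,0],C)$ together with $x=f(0,\cdot)\in C$; reading off the first coordinate then gives $u(t,\cdot)\in C$, i.e.\ $u(t,\cdot)$ is monotone in the same sense as the data for all $t\ge 0$. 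I do not expect a genuine obstacle here: the only non-formal ingredient is the identification of $C$ with the monotone functions via Lemma \ref{lem:monot}, which is what makes the hypothesis (stated for merely monotone, possibly non-smooth, $f$) fit the cone $\mathcal C$ and lets Assumption (3) be read off; the remaining steps are bookkeeping inside the product-space reformulation of \eqref{delay}, together with the observation that the sign $c\ge 0$ is exactly what makes the delay term compatible with $C$.
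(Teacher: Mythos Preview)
Your proposal is correct and follows exactly the route the paper intends: the corollary is stated without proof as a direct application of the abstract delay theorem, and you have supplied precisely the verification of its three hypotheses (choice of $X=BUC(\RR^+)$, $S=\pm\tfrac{\ud}{\ud x}$, $B=\partial_x$ generating the left shift, $\Phi f=c\,f(-1)$ via a point mass $\eta$). The only cosmetic point is that Lemma~\ref{lem:monot} is stated for $BUC(\RR)$ rather than $BUC(\RR^+)$, but its proof transfers verbatim to the half-line (or one extends by a constant to the left), so this is not a genuine gap.
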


\begin{corollary}
Consider the diffusion equation
\begin{align*}
\partial_t u(t,x) &= \partial^2_{xx} u(t,x) + \begin{cases} c u(t-\tau,x+\tfrac{1}{2}), \quad t\geq0,\, \ x\in [0,\tfrac{1}{2}],\\
 c u(t-\tau,x-\tfrac{1}{2}), \quad t\geq0,\, \ x\in [\tfrac{1}{2},1],\end{cases}\\
u(t,0) &= u(t,1)=0 \quad t\geq -\tau,\\
u(s,x) &= f(s,x),\quad s\in [-\tau,0], \quad x\in [0,1],
\end{align*}
where $c>0$. If $f(s,\cdot)$ is a negative convex function for all $s\in [-\tau,0]$, then $u(t,\cdot)$ is negative and convex for all $t\geq 0$.
\end{corollary}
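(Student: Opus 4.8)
The plan is to recognize this boundary value problem with delay as a special case of the abstract delay equation~\eqref{delay} and to invoke the theorem on $S$-shape preservation of delay equations proved above. Rescaling time so that the delay length $\tau$ equals $1$ (this merely multiplies the Laplacian and the coupling constant by the positive number $\tau$ and is harmless), I would take $X:=L^2(0,1)$, let $B$ be the Dirichlet Laplacian $Bf=f''$ with $D(B)=H^2(0,1)\cap H^1_0(0,1)$, and let $\Phi\colon W^{1,p}([-1,0],X)\to X$ be $\Phi f:=c\,Rf(-1)$, where $R\in\cL(X)$ is the spatial substitution that interchanges the two halves of $(0,1)$, i.e.\ $(Rg)(x)=g(x+\tfrac12)$ for $x\in(0,\tfrac12)$ and $(Rg)(x)=g(x-\tfrac12)$ for $x\in(\tfrac12,1)$. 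As in the $L^2(0,\pi)$ example of Section~\ref{examples} I would choose $Y:=X\times X$ and $Sf:=(-f,f'')$ on $D(S):=H^2(0,1)$, so that by Lemma~\ref{lem:convex_lp} the cone $C$ is the $L^2$-closure of the negative convex functions. With these identifications the history formulation of the stated system is exactly~\eqref{delay} (the well-posedness and the Miyadera smallness of the delay term required in the abstract theorem hold for every equation of this form by B\'atkai--Piazzera~\cite{Batkai-Piazzera}), so what remains is to verify the three standing assumptions (1)--(3).

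Assumption (1), that $(B,D(B))$ is $S$-shape preserving and compatible with $S$, is precisely the Farag\'o--Pfeil result recalled in Section~\ref{examples}, transferred from $(0,\pi)$ to $(0,1)$ by an affine change of the spatial variable, which conjugates the two Dirichlet Laplacians up to a positive constant and leaves negativity and convexity untouched. Assumption (2) holds with $\eta\in BV([-1,0],\cL(X))$ the step function having a single jump at $s=-1$ chosen so that $\int_{-1}^0 d\eta(s)\,f(s)=c\,Rf(-1)$; in particular $\eta$ takes only the two values $0$ and $cR$. Assumption (3) then reads $\eta(s)C\subset C$, and because $C$ is a cone and $c>0$ it is equivalent to the single inclusion $RC\subset C$. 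I expect this to be the main obstacle: it is the one ingredient that is neither quoted nor bookkeeping, and it must be read off from the geometric description of $C$ — the point to be careful about being the behaviour of $Rg$ at the seam $x=\tfrac12$, where two affinely reparametrised pieces of the graph of $g$ are joined. Granting it, $RC\subset C$ follows for the closure since $R$ is bounded.

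With (1)--(3) established, the abstract theorem above yields $\mathcal U(t)\mathcal C\subset\mathcal C$ for the delay semigroup on $\mathcal E=X\times L^p([-1,0],X)$, $\mathcal C=C\times L^p([-1,0],C)$, and reading off the first coordinate and using the equivalence of~\eqref{delay} and~\eqref{acp_delay} gives $u(t)\in C$ for all $t\ge0$ whenever $x\in C$ and $f\in L^p([-1,0],C)$. By Lemma~\ref{lem:convex_lp} this says $u(t,\cdot)$ agrees almost everywhere with a negative convex function. To reach the classical conclusion I would finally note, using the Dyson--Phillips series for the delay semigroup together with the smoothing of the Dirichlet heat semigroup \sem{B} (which maps $X$ into $D(B)\subset C[0,1]$, with an integrable singularity at $t=0$), that $u(t,\cdot)\in C\cap C[0,1]$ for $t>0$; a continuous function lying in $C$ is negative and convex in the ordinary sense, which is the claim. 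To summarise, the whole argument is routine apart from Assumption (3), the compatibility of the half-interchange $R$ with the cone of negative convex functions.
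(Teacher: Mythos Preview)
Your overall plan is exactly the one the paper intends: the corollary is offered as an illustration of the abstract delay theorem with no separate proof, so the entire content lies in checking assumptions (1)--(3). You handle (1) and (2) correctly and, commendably, isolate (3) --- the inclusion $RC\subset C$ for the half-interchange $R$ --- as the real issue. The problem is that this inclusion is \emph{false}, so the gap you flagged cannot be filled.

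Take $g(x)=x(x-1)$. Then $g\in D(B)\cap D(S)$, $g\le 0$, $g''\equiv 2\ge 0$, and $g(0)=g(1)=0$, so $g$ lies in the pre-closure set defining $C$. But
\[
(Rg)(x)=\begin{cases} x^2-\tfrac14, & x\in[0,\tfrac12],\\[1mm] x^2-2x+\tfrac34, & x\in[\tfrac12,1],\end{cases}
\]
is continuous, $\le 0$, and has one-sided derivatives $+1$ and $-1$ at $x=\tfrac12$: a \emph{concave} kink. Hence $Rg$ is not convex, and since it is continuous, Lemma~\ref{lem:convex_lp} gives $Rg\notin C$. Thus assumption~(3) fails, and with it the hypotheses of both the abstract delay theorem and Theorem~\ref{thm:miyadera}. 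Your instinct about the seam was exactly right; in fact every nonzero convex $g$ with $g(0)=g(1)=0$ has $g'(0^+)<0<g'(1^-)$, so the concave kink in $Rg$ at $\tfrac12$ is generic rather than exceptional, and restricting to history data satisfying the Dirichlet condition does not help.

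In short: your proposal is structurally sound and coincides with the paper's intended route, but that route breaks down precisely at the obstacle you identified. The corollary does not follow from the machinery developed in the paper, and the heuristic $\partial_t u''\approx (\text{positive})-c\,\delta_{1/2}$ coming from the example above suggests it may simply be false as stated.
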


\section{Appendix}

We show here that sets of monotone and convex functions in certain spaces, are closures of the positivity sets of the first and the second derivatives, respectively. The aim is to give a further justification of our definition of preservation of shape.
Though the following results seem to be a common knowledge 
we include them for the convenience of the reader. As it was kindly pointed out to us by the editor, an alternative, standard and natural way of proving the first two lemmas is by using convolution with positive test functions. Convolving a non-increasing (or: convex) function in $BUC$ with a positive test function we obtain a non-increasing (or: convex) $C^\infty$ function, and by taking an approximate identity of test functions, we approximate our initial function in the uniform norm.

\begin{lemma}\label{lem:monot} A non-increasing function in $BUC(\mathbb R)$ may be approximated by continuously differentiable, non-increasing functions in the same space.
\end{lemma}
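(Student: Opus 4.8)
The plan is to prove Lemma~\ref{lem:monot} by mollification: given a non-increasing $f\in BUC(\RR)$, I would convolve it with a smooth, compactly supported, non-negative kernel of unit mass and show that the resulting functions are continuously differentiable, non-increasing, lie in $BUC(\RR)$, and converge to $f$ uniformly.

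First I would fix a mollifier: let $\rho\in C_c^\infty(\RR)$ with $\rho\ge 0$, $\operatorname{supp}\rho\subset[-1,1]$, and $\int_\RR \rho = 1$, and set $\rho_\eps(y):=\eps^{-1}\rho(y/\eps)$ for $\eps>0$. Define $f_\eps:=f*\rho_\eps$, i.e. $f_\eps(x)=\int_\RR f(x-y)\rho_\eps(y)\ud y=\int_\RR f(y)\rho_\eps(x-y)\ud y$. Since $f$ is bounded and $\rho_\eps\in C_c^\infty$, differentiation under the integral sign (justified by dominated convergence, as $\rho_\eps'$ is bounded with compact support) gives $f_\eps\in C^\infty(\RR)$ with $f_\eps'=f*\rho_\eps'$. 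To see $f_\eps$ is non-increasing I would use the first representation and monotonicity of $f$: for $x_1\le x_2$,
\begin{equation*}
f_\eps(x_2)-f_\eps(x_1)=\int_\RR \big(f(x_2-y)-f(x_1-y)\big)\rho_\eps(y)\ud y\le 0,
\end{equation*}
because $x_2-y\ge x_1-y$ forces $f(x_2-y)\le f(x_1-y)$ and $\rho_\eps\ge 0$. For membership in $BUC(\RR)$: $\|f_\eps\|_\infty\le \|f\|_\infty$ since $\int\rho_\eps=1$, and uniform continuity of $f_\eps$ follows from that of $f$ via $|f_\eps(x+h)-f_\eps(x)|\le\int_\RR|f(x+h-y)-f(x-y)|\rho_\eps(y)\ud y\le\omega_f(|h|)$, where $\omega_f$ is the modulus of continuity of $f$; in fact $f_\eps'\in BUC(\RR)$ as well since $f_\eps' = f * \rho_\eps'$ is a convolution of a bounded function with an $L^1$ function and inherits uniform continuity the same way. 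Finally, uniform convergence $f_\eps\to f$ is the standard approximate-identity estimate: $|f_\eps(x)-f(x)|\le\int_{|y|\le\eps}|f(x-y)-f(x)|\rho_\eps(y)\ud y\le\omega_f(\eps)\to 0$ as $\eps\to 0$, uniformly in $x$.

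Honestly, there is no serious obstacle here: each step is a routine property of mollification, and the only point requiring a moment's care is that we want the \emph{derivative} to also lie in $BUC(\RR)$ (so that $f_\eps$ belongs to the space $X^1$ used in the monotonicity examples), which follows because $f*\rho_\eps'$ is again a convolution of a bounded uniformly continuous function with an $L^1$ kernel. One could equivalently note $f_\eps' = f' * \rho_\eps$ in the distributional sense when $f$ has a (signed, non-positive) derivative measure, but keeping everything at the level of $f*\rho_\eps'$ avoids invoking any distribution theory. The proof then transcribes essentially verbatim for Lemma~\ref{lem:convex}, convexity being preserved by averaging against $\rho_\eps\ge 0$ by exactly the same Jensen/difference-quotient argument.
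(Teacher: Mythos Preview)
Your mollification argument is correct and complete; each step is as routine as you say. It is, however, a genuinely different route from the paper's own proof. The paper constructs the approximation by hand: using that a non-increasing $f\in BUC(\RR)$ has limits at $\pm\infty$, it reduces to a finite interval $[a,b]$, partitions it so that $f$ varies by less than $\eps$ on each subinterval, and on each piece replaces $f$ by a stretched and translated cosine matching the endpoint values with zero derivative there, then glues the pieces (constant outside $[a,b]$). Your convolution approach is shorter and more uniform, and yields $C^\infty$ approximants rather than merely $C^1$; indeed, the paper itself remarks, just before the lemma, that mollification is ``an alternative, standard and natural way'' to prove it. What the paper's explicit construction buys is control over endpoint behaviour of the approximant and its derivatives --- irrelevant here, but used after the companion convexity lemma (Remark~\ref{rem:approx_d}), where the authors need $g''(a)=g''(b)=0$ for compatibility with Wentzell-type boundary conditions. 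Your approach would not deliver that without further modification.
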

\begin{proof}
Given numbers $a<b$ and $c\ge d$, we may find a non-increasing
differentiable function $g$ on $[a,b]$ such that $g(a)= c, g'(a)=0, g(b) = d$ and $g'(b)=0$; this may be achieved by stretching and translating  $g(x) = \cos x , x \in [0,\pi]. $  In particular, for any non-increasing $f\in C[a,b]$ with $f(a)=c$ and $f(b)=d$, we have $\|g- f\|_{C[a,b]} \le c - d.$

Given $\eps >0$ and a non-increasing $f\in X$ we may find reals $a $ and $b$ such that $f(-\infty) - f(a) < \eps $ and $f(b) - f(+\infty) < \eps .$ Next, we may find a  natural $n$ and points $a= a_1< a_2 < \cdots <a_n = b$ such that $f(a_{i}) - f(a_{i-1}) < \eps $. Then,
$$g = f(-\infty) \1_{(-\infty,a_0)} + \sum_{k=0}^{n} g_i  \1_{[a_i,a_{i+1})} + f(\infty) \1_{(a_{n+1}, \infty)}  $$
where $a_0= a_1 - 1 , a_{n+1} = a_n + 1 $, and  $g_i $ are defined on $[a_i,a_{i+1}]$ as  non-increasing, continuously differentiable functions satisfying $g(a_i)= f(a_i), g'(a_i)=0, g(a_{i+1})=f(a_{i+1})$ and $g'(a_{i+1})=0,$ satisfies $\sup_{x \in \mathbb R} |f(x) - g(x) | < \eps. $ \end{proof}

\begin{lemma}\label{lem:convex}
Let $-\infty< a<b < \infty$ and let $I=[a,b]$ or $I=[a,\infty)$ or $I=(-\infty,b]$. Then all convex functions in $BUC(I)$ can be approximated by twice continuously differentiable convex functions.
\end{lemma}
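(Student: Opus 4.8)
The plan is to reduce the assertion, in all three cases, to the elementary fact that convolution with a smooth probability density preserves convexity. Fix a nonnegative $\rho\in C^\infty(\RR)$ with support in $[-1,1]$ and $\int_{\RR}\rho=1$, and set $\rho_\eps(x):=\eps^{-1}\rho(x/\eps)$. If $g$ is convex and continuous on $\RR$, then $g*\rho_\eps$ is of class $C^\infty$, it is convex on $\RR$ (being an average of translates of $g$), and $g*\rho_\eps\to g$ uniformly on every compact set as $\eps\to0$; moreover, if $g$ is bounded and uniformly continuous on a neighbourhood of a closed half-line $J$, then $g*\rho_\eps$ is bounded and uniformly continuous on $J$ and converges to $g$ uniformly on $J$. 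Hence it suffices to extend a given convex $f\in BUC(I)$ to a function $g$ that is convex and continuous on $\RR$, agrees with $f$ on $I$ up to a prescribed error $\eps$ in the sup-norm over $I$, and, when $I$ is a half-line, is in addition bounded and uniformly continuous on a slightly larger half-line; restricting $g*\rho_\eps$ to $I$ for $\eps$ small then yields a $C^2$ (in fact $C^\infty$) convex function lying in $BUC(I)$ and within $2\eps$ of $f$, since on a compact interval every $C^2$ function is automatically in $BUC$.

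Producing such a $g$ is where the work lies. First I would remove the infinite ends. A bounded convex function on $[a,\infty)$ is automatically non-increasing --- were its right derivative strictly positive at some point, the function would grow at least linearly --- and hence decreases to its infimum; so, given $\eps$, one may choose $b>a$ with $f(b)-\inf f<\eps$ and replace $f$ on $[b,\infty)$ by the constant $f(b)$, which keeps the function convex and shifts it by less than $\eps$ in sup-norm, the symmetric truncation handling $(-\infty,b]$. After this step only the finite endpoints of $I$ remain, and there the genuine difficulty appears: the one-sided derivative of $f$ at such an endpoint, say the right derivative $f'_+(a)$, may equal $-\infty$ (as for $f(x)=-\sqrt{x-a}$), so that $f$ admits no convex extension across $a$ whatsoever. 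To circumvent this, I would replace $f$ on a short interval $[a,a+\delta]$ by the segment of a supporting line of $f$ at the interior point $a+\delta$; this segment lies below $f$, and since its slope is trapped between the difference quotients $\tfrac{f(a+\delta)-f(a)}{\delta}$ and $\tfrac{f(b)-f(a+\delta)}{b-a-\delta}$, the modification changes $f$ by an amount that tends to $0$ with $\delta$ and so can be made $<\eps$, while the modified function is still convex on $I$ and now has a finite one-sided derivative at $a$. Doing the same at the other finite endpoint, if any, one may finally prolong the modified function across each finite endpoint by a tangent half-line, obtaining a $g$ that is convex and continuous on $\RR$, bounded and uniformly continuous on a neighbourhood of $I$ in the half-line cases (a continuous function on the compact part, a bounded segment near each finite endpoint, a constant on each truncated tail), and within $O(\eps)$ of $f$ on $I$ --- exactly what the reduction requires.

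The step I expect to be the real obstacle is precisely this last one: the one-sided derivative of a convex $BUC$ function at a finite endpoint can be infinite, so neither a tangent-line extension nor a reflection extension is available, and one must insert the small inward supporting segment and check that it costs only a negligible amount in the uniform norm; the remainder is bookkeeping. The approach just described is, in effect, the convolution argument noted at the start of this Appendix, with the behaviour at finite endpoints made precise. A fully elementary variant that avoids endpoint derivatives altogether, and is closer in spirit to Lemma~\ref{lem:monot}, runs as follows: approximate $f$ uniformly by piecewise-linear convex functions --- on a compact interval the secant interpolant on a fine mesh works, its consecutive slopes being non-decreasing exactly because $f$ is convex, with error controlled by the modulus of continuity of $f$ --- and then round each of its finitely many corners on a tiny neighbourhood by a $C^\infty$ convex arc matching the function to second order at the two ends, the total change being arranged to be $<\eps$; on a half-line one first applies the monotonicity-and-truncation observation above to reduce to a compact interval and then glues on a constant tail.
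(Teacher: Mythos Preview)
Your proposal is correct. The main route you develop---extend $f$ convexly to all of $\RR$ and mollify---is precisely the alternative the paper alludes to in the paragraph preceding the lemma but does not write out; you supply the details the paper omits, in particular the observation that the one-sided derivative at a finite endpoint may be $-\infty$ and the repair via a short supporting segment (your bound on the segment's slope by the two secant slopes is what makes the error go to zero, and is the right estimate). The paper's own proof is instead your ``fully elementary variant'': secant interpolation on a fine mesh, which is automatically convex and sidesteps the endpoint-derivative issue entirely since only function values are used, followed by rounding the finitely many corners. The piecewise-linear route is shorter and, as recorded in Remark~\ref{rem:approx_d}, yields approximants with $g''(a)=g''(b)=0$ and matching boundary values, a feature used elsewhere in the paper; your mollification route produces $C^\infty$ approximants and treats the half-line cases more explicitly than the paper's ``can be handled similarly''.
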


\begin{proof} We give the proof here for a finite interval; the infinite case can be handled similarly.
We approximate $f$  first by a piecewise linear function $g$: given  a natural $n$ and midpoints $a=x_0<x_1<\ldots <x_n=b$ we find a function $g$ such that $g$ is linear on each interval $[x_i,x_{i+1}]$, and $g(x_i)=f(x_i)$ at all points $x_i.$ Since $f$ is convex, so is $g$ and continuity of $f$ implies that given $\eps >0$ we may choose a sufficiently dense mesh of midpoints to make sure that the distance between $f$ and $g$ is less than a given $\eps. $ Moreover, since the number of points where $g$ is not differentiable is finite and at these points both functions are equal, we may smoothen $g$ out at these points without increasing the distance between the functions to find a twice continuously differentiable convex function within $\eps$ distance of $f$, as claimed. 
\end{proof}

\begin{remark}\label{rem:approx_d}
Note that the approximating function $g$ described above satisfies $g''(a)=g''(b)=0$. Additionally, if $f(a)=f(b)=0$  then $g(a)=g(b)=0,$ as well.
\end{remark}

\begin{remark}\label{rem:app_conv_square}
This argument can be generalized in a straightforward way to higher dimensions. Namely, we have the following. Let $\bar\Omega:=[0,1]\times [0,1]$ and assume that $f:\bar\Omega\to \RR$ is convex. Then it can be approximated uniformly by smooth convex functions.
\end{remark}

\begin{lemma}\label{lem:convex_lp}
Let $X=L^p(a,b)$ and \rm $D:=\overline{\{f\in X:\, f \text{ convex}\}}$\it . Then $D\cap C[a,b]$ is composed of convex functions.
\end{lemma}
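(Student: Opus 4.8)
The plan is to show that if $g \in D \cap C[a,b]$, then $g$ is convex, by exploiting the fact that convexity is a closed condition under $L^p$-convergence \emph{when the limit is known to be continuous}. First I would fix $g \in D$, so that there is a sequence $(f_n)$ of convex functions in $X = L^p(a,b)$ with $\|f_n - g\|_{L^p} \to 0$. By passing to a subsequence, I may assume $f_n \to g$ almost everywhere. The goal is to upgrade this to the convexity inequality for $g$ at \emph{every} pair of points, using the extra hypothesis $g \in C[a,b]$.

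The key step is the following. Convexity of $f_n$ says that for every $x, y \in (a,b)$ and every $\lambda \in [0,1]$,
\begin{equation*}
f_n(\lambda x + (1-\lambda) y) \le \lambda f_n(x) + (1-\lambda) f_n(y).
\end{equation*}
I cannot evaluate $f_n$ or $g$ pointwise directly, since they are only $L^p$-functions, so instead I would integrate: fix a small $h>0$ and average the inequality over $x$ ranging in a small interval around a fixed point $u$ and $y$ ranging in a small interval around a fixed point $v$. Because $f_n \to g$ in $L^p$, these local averages of $f_n$ converge to the corresponding local averages of $g$; and since $g$ is continuous, letting $h \to 0$ recovers the genuine pointwise values $g(u)$, $g(v)$, and $g(\lambda u + (1-\lambda)v)$. (Here one must be slightly careful that the midpoint $\lambda u + (1-\lambda)v$ of the averaging regions matches the average of the midpoints up to an error vanishing with $h$; continuity of $g$ again absorbs this.) This yields
\begin{equation*}
g(\lambda u + (1-\lambda) v) \le \lambda g(u) + (1-\lambda) g(v)
\end{equation*}
for all $u,v$ in the interior and all $\lambda$, hence on all of $[a,b]$ by continuity, which is exactly convexity of $g$.

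An alternative, perhaps cleaner, route is to test against nonnegative mollifiers: convolving a convex function with a nonnegative smooth kernel (suitably interpreted near the endpoints, e.g.\ working on a slightly smaller interval) preserves convexity, and $f_n * \varphi_\varepsilon$ converges to $g * \varphi_\varepsilon$ uniformly on compact subinterval as $n \to \infty$ for fixed $\varepsilon$; thus each $g * \varphi_\varepsilon$ is convex, and since $g$ is continuous, $g * \varphi_\varepsilon \to g$ locally uniformly as $\varepsilon \to 0$, so $g$ is a locally uniform limit of convex functions and is therefore convex. I would likely present this second argument, as it is shorter.

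The main obstacle is purely technical: $L^p$-convergence gives no control of pointwise values, so the whole difficulty is in transferring the pointwise convexity inequality through an averaging or mollification procedure and then using continuity of $g$ to pass to the limit in the smoothing parameter. There is no serious conceptual content beyond this, and boundary effects near $a$ and $b$ are handled either by the continuity of $g$ up to the endpoints or by exhausting $(a,b)$ with compact subintervals.
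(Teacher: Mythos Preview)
Your proposal is correct, and both the averaging and mollification routes work. However, the paper's argument is shorter and more direct than either, and in fact it stays with the a.e.-convergent subsequence you extract at the outset rather than pivoting to integral averages.

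The paper's route is this: on the full-measure set $E \subset [a,b]$ where $f_{n_k} \to g$ pointwise, the convexity inequality
\[
f_{n_k}(\alpha x + (1-\alpha)y) \le \alpha f_{n_k}(x) + (1-\alpha) f_{n_k}(y)
\]
passes to the limit for all $x,y \in E$ and all $\alpha$ such that $\alpha x + (1-\alpha)y \in E$. Since $E$ has full measure and $\alpha \mapsto \alpha x + (1-\alpha)y$ is affine, the set $E_{x,y}$ of such $\alpha$ has full measure in $[0,1]$. Continuity of $g$ then extends the inequality first to all $\alpha \in [0,1]$ (for fixed $x,y \in E$), and then, by density of $E$, to all $x,y \in [a,b]$.

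So the paper avoids mollifiers and local averaging entirely, trading them for a small measure-theoretic observation about preimages under affine maps. Your mollification argument is arguably more robust and generalizes more easily (e.g.\ to higher dimensions or to other closed-under-smoothing properties), but requires managing the boundary near $a$ and $b$. Both approaches hinge on the same essential point: continuity of $g$ is what upgrades almost-everywhere information to the genuinely pointwise convexity inequality.
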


\begin{proof}
Assume that $f_n\in X, n \ge 1$  are convex and $f_n\to f$ in $X$, where $f$ is convex. Then there is a subsequence $(n_k)_{k\ge 1}$ such that $f_{n_k}(x) \to f(x), $ as $k\to \infty$ for $x $ in a set $E\subset [a,b]$ of measure $b-a$. It follows that
$$ f(\alpha x + (1-\alpha) y) \le \alpha f(x) + (1-\alpha) f(y)$$
for all $x,y\in E$ and for all $\alpha\in E_{x,y}$, where $E_{x,y}\subset [0,1]$ is a set of measure $1$. By continuity of $f$ we conclude first that the same inequality is true for all $\alpha \in [0,1]$, and then for all $x,y \in [a,b].$
\end{proof}



\section*{Acknowledgments}
The European Union and the European Social Fund have provided financial support to the project under the grant agreement no. T\'{A}MOP-4.2.1/B-09/1/KMR-2010-0003. Supported by the OTKA grant Nr. K81403. A.~B\'atkai was further supported by the Alexander von Humboldt-Stiftung.

\end{document}